\newtheorem{theorem}{Theorem}
\title{\LARGE \bf
Data-Driven Predictive Control Using Closed-Loop Data: An Instrumental Variable Approach
}
\author{Yibo Wang, Yiwen Qiu, Malika Sader, Dexian Huang, and Chao Shang, \IEEEmembership{Member, IEEE}
\thanks{This work was supported by National Natural Science Foundation of China under Grant 62373211. \textit{(Corresponding author: Chao Shang.)}}
\thanks{Y. Wang and M. Sader are with Department of Automation, Tsinghua University, Beijing 100084, China (e-mail: wyb21@mails.tsinghua.edu.cn, mlksdr@tsinghua.edu.cn). }
\thanks{Y. Qiu is with Carnegie Mellon University,
5000 Forbes Avenue, Pittsburgh, PA 15213, USA
(e-mail: yiwenq@andrew.cmu.edu)}
\thanks{D. Huang and C. Shang are with Department of Automation, Beijing National Research Center for Information Science and Technology, Tsinghua University, Beijing 100084, China (e-mail: huangdx@tsinghua.edu.cn, c-shang@tsinghua.edu.cn). }}
\begin{document}

\maketitle
\thispagestyle{empty}
\pagestyle{empty}

\begin{abstract}
   Current data-driven predictive control (DDPC) methods heavily rely on data collected in open-loop operation with elaborate design of inputs. However, due to safety or economic concerns, systems may have to be under feedback control, where only closed-loop data are available. In this context, it remains challenging to implement DDPC using closed-loop data. In this paper, we propose a new DDPC method using closed-loop data by means of instrumental variables (IVs). By drawing from closed-loop subspace identification, the use of two forms of IVs is suggested to address the closed-loop issues caused by feedback control and the correlation between inputs and noise. Furthermore, a new DDPC formulation with a novel IV-inspired regularizer is proposed, where a balance between control cost minimization and weighted least-squares data fitting can be made for improvement of control performance. Numerical examples and application to a simulated industrial furnace showcase the improved performance of the proposed DDPC based on closed-loop data.
\end{abstract}

\section{Introduction}
\label{sec:introduction}
In recent years, data-driven methods have received widespread attentions across various fields in systems and control \cite{hou2013model,shang2019data}. From the control perspective, a large number of recent results on data-driven predictive control (DDPC) methods have emerged built upon the behavioral systems theory \cite{willems2005note,markovsky2021behavioral}. A salient feature of DDPC methods is that control policies can be directly attained from raw data without the need for model identification \cite{markovsky2016missing}, while preserving the capability of constraint handling and the robustness of receding horizon implementation of the classical model-based predictive control scheme \cite{breschi2023data}.


For most DDPC methods, in order to ensure the requirement of persistent exciting data, data collection is typically carried out in open-loop operation with elaborate design of inputs \cite{markovsky2021behavioral,markovsky2016missing}. Yet, open-loop data collection is not always possible in practice since the process may be unstable or have to be operated under feedback control due to safety or economic concerns. In such cases, to handle constraints and attain superior performance, an advanced data-driven predictive controller may be favored in place of current controller that has been ``coarsely" tuned to ensure closed-loop stability. However, it may be problematic to implementing DDPC with closed-loop data. As documented in \cite{van2022data}, the basic form of DDPC can be interpreted as using a particular choice of instrumental variable (IV), which shall be uncorrelated with future innovations in open-loop conditions. However, this no longer holds in closed-loop conditions due to the correlation between future inputs and innovations, which eventually yields biased output predictions \cite{dinkla2023closed,huang2008dynamic}. Thus, how to implement DDPC using closed-loop data remains a significant challenge.

In this work, a new DDPC method using closed-loop data is proposed based on the DDPC framework in \cite{van2022data} by means of IVs. By borrowing from closed-loop subspace identification (SID), we propose the use of two specific forms of IV in DDPC based on closed-loop data. The first one is the future feedback reference, which has been generically used in SID to eliminate the bias induced by feedback \cite{huang2005closed}. The second one is a more sophisticated one designed using the left coprime factorization (LCF) of controller \cite{li2020novel}, which has been adopted in closed-loop SID with significant performance improvement. A combined use of both IVs helps eliminating the effect of noise while retaining useful information within future inputs and outputs. Furthermore, for the IV based on feedback reference, we point out that it helps to disentangle the blending of controller dynamics and plant dynamics with input/output data, which is a critical challenge for DDPC with closed-loop data but still remains uncovered in literature. Based on this, we further propose a new regularized DDPC formulation with a novel IV-inspired regularizer, where a balance between minimizing control cost and fitting data via a weighted least-squares criterion can be made to improve tracking performance. Numerical examples and application to a simulated tubular furnace system demonstrate that, the proposed method offers possibility of implementing DDPC merely using closed-loop data.


The rest of this work is structured as follows. Section II gives a brief introduction of the DDPC with IV scheme. In Section III, the choices of IV based on future reference and controller information are discussed, followed by a regularized DDPC with IV method. Results of case studies are reported in Sections IV and V, followed by final conclusions.

\textbf{Notation:} Given a sequence $\{x(i)\}_{i=1}^N$, $x_{[i,j]}$ denotes its restriction of to the interval $[i,j]$, i.e. $x_{[i,j]} = {\rm{col}}(x(i),...,x(j))$. The Hankel matrix operator $\mathcal{H}_s(x_{[i,j]})$ is used to construct the block Hankel matrix of depth $s$. For a state-space model $(A,B,C,D)$, $\Gamma_{s}(A,C)$ and $\Delta_s(A,B)$ denote the extended observability and controllability matrices of order $s$. A lower block-triangular Toeplitz matrix of depth $s$ can be constructed using the operator $\mathcal{T}_{s}(A,B,C,D)$. $\|\cdot\|_F$ denotes the Frobenius norm. Given an integer $s$, $1_s\in\mathbb{R}^s$ denotes the vector of all ones, and $I_s$ denotes the identity matrix of size $s$. The Kronecker product of two matrices is indicated by $X \otimes Y$.

\section{Preliminaries}

Consider the discrete-time linear time-invariant (LTI) system
\begin{equation}
    \label{equation: LTI system in free noise}
    \begin{aligned}
        x(t+1)&=Ax(t)+Bu(t),\\
        y(t)&=Cx(t)+Du(t),
    \end{aligned}
\end{equation}
where $x(t)\in\mathbb{R}^n$, $u(t)\in\mathbb{R}^m$ and $y(t)\in\mathbb{R}^p$ denote state, input and output, respectively. It is assumed that the system \eqref{equation: LTI system in free noise} is minimize. Given an input-output trajectory $\{u^d(i),y^d(i)\}_{i=1}^N$ from \eqref{equation: LTI system in free noise}, the well-known DDPC can be formulated as a constrained optimization problem at time $t$ with past horizon $L_p \ge n$ and future horizon $L_f \ge 1$ \cite{coulson2019data}:
\begin{subequations}
    \label{equation: DeePC}
    \begin{align}
        \min_{u_f,\hat{y}_f,g}\ &\mathcal{J}(u_f,\hat{y}_f)\\
        \rm{s.t.}\ \ &
        \begin{bmatrix}
            Z_p\\U_f\\Y_f   
        \end{bmatrix} g=
        \begin{bmatrix}
            z_p\\u_f\\\hat{y}_f
        \end{bmatrix}, \label{eq: 2b} \\
        & u_f \in \mathbb{U},~ \hat{y}_f \in \mathbb{Y}
    \end{align}
\end{subequations}
where
\begin{equation*}
    \begin{aligned}
        &Z_p={\rm col}(U_p,Y_p),~z_p(t)={\rm col}(u_p,y_p),\\
        &U_p=\mathcal{H}_{L_p}(u^d_{[1,N-L_f]}),~U_f=\mathcal{H}_{L_f}(u^d_{[L_p+1,N]}),\\
        &u_p={\rm col}(u_{[t-L_p,t-1]}),~u_f={\rm col}(u_{[t,t+L_f-1]}),
    \end{aligned}
\end{equation*}
and similarly for $Y_p$, $Y_f$, $y_p$ and $\hat{y}_f$, $g\in\mathbb{R}^{\bar{N}}$ with $\bar{N}=N-L_f-L_p+1$, and $\mathbb{U}$ and $\mathbb{Y}$ are input and output constraint sets. For future output $\hat{y}_f$, we use the symbol $\hat{\cdot}$ to stress its extrapolating nature. The objective of \eqref{equation: DeePC} can be defined as the standard quadratic cost:
\begin{equation}
    \label{equation: cost function}
    \begin{aligned}
    \mathcal{J}(u_f,\hat{y}_f)=\|\hat{y}_f-y^r_f\|_Q^2+\|u_f\|_R^2,
    \end{aligned}
\end{equation}
where $Q,R\succ0$ are weighting matrices and $y^r_f$ denotes the future output reference. 

In the presence of process disturbance and measurement noise, we consider the stochastic system expressed in an innovation form:
\begin{equation}
    \label{equation: system in innovation form}
    \begin{aligned}
        x(t+1)&=Ax(t)+Bu(t)+Ke(t),\\
        y(t)&=Cx(t)+Du(t)+e(t),
    \end{aligned}
\end{equation}
where $K$ is the steady state Kalman gain, and the innovation $e(t)$ is zero-mean white noise sequence. The matrix $A_K\triangleq A-KC$ is assumed to be strictly stable and $\|A_K^{L_p}\|_F\approx0$ holds for a sufficiently large $L_p$ \cite{chiuso2007role}. In this case, the following input/output data equation has been widely used in SID and data-driven control \cite{breschi2023data}:
\begin{equation}
    \label{equation: data equation}
    \begin{aligned}        &Y_f=\Gamma_{L_f}\underbrace{\begin{bmatrix}\Delta_{L_p}^u&\Delta_{L_p}^y\end{bmatrix}}_{\triangleq \Delta_{L_p}} Z_p+H_{L_f}^uU_f+H_{L_f}^eE_f,
    \end{aligned}
\end{equation}
where $\Gamma_{L_f}$ is the observability matrix, $\Delta_{L_p}^u=\Delta_{L_p}(A,B)$ and $\Delta_{L_p}^y=\Delta_{L_p}(A,K)$ are controllability matrices, $H_{L_f}^u$ and $H_{L_f}^e$ are block Toeplitz matrices constructed of $(A,B,C,D)$ and $(A,K,C,I)$. The Hankel matrix $E_f=\mathcal{H}_{L_f}(e^d_{[L_p+1:N]})$ encodes the uncertainty arising from future innovations. As for online data sequence from \eqref{equation: system in innovation form}, a similar relation stands:
\begin{equation}
    \label{equation: data sequence}
    \begin{aligned}
        &y_f=\Gamma_{L_f}\Delta_{L_p}z_p+H_{L_f}^uu_f+H_{L_f}^ee_f,
    \end{aligned}
\end{equation}
where $e_f={\rm col}(e_{[t,t+L_f-1]})$ denotes the future innovation. Using the behavioral relation \eqref{eq: 2b} for prediction, we obtain:
\begin{equation}
    \label{equation: output predictions}
    \begin{aligned}
        \hat{y}_f & =Y_fg\\        & = \Gamma_{L_f}\Delta_{L_p}z_p+H_{L_f}^uu_f+H_{L_f}^eE_fg\\
        & = y_f+H_{L_f}^e E_fg - H_{L_f}^e e_f.
    \end{aligned}
\end{equation}
It is clear that the error in $\hat{y}_f$ stems from two aspects, i.e. the multiplicative uncertainty $H_{L_f}^e E_fg$ depending on the solution $g$, and the future innovation $H_{L_f}^e e_f$ that is essentially inevitable.
To dispel the effect of $H_{L_f}^e E_fg$, the usage of instruments has been suggested by \cite{van2022data,dinkla2023closed}, in a similar spirit to its usage in SID. More precisely, an IV matrix $\Phi$ uncorrelated with future innovations is considered, which possesses the following asymptotic property:
\begin{equation}
    \label{equation: IV independent of noise}
    \lim_{\bar{N}\to\infty}E_f\Phi^\top \approx 0.
\end{equation}
Letting $g=\Phi^\top h$, which enforces $g$ to lie in the row space of $\Phi$, then the effect of $H_{L_f}^e E_fg$ vanishes desirably. In this way, one attains a \textit{tightened} optimal control problem \cite{van2022data}:
\begin{subequations}
    \label{equation: DeePC with IV}
    \begin{align}
        \min_{u_f,\hat{y}_f,h}\ &\mathcal{J}(u_f,\hat{y}_f)\\
        \rm{s.t.}\ \ &
        \begin{bmatrix}
            Z_p\\U_f\\Y_f   
        \end{bmatrix}\underbrace{\Phi^\top h}_{=g}=
        \begin{bmatrix}
            z_p\\u_f\\\hat{y}_f
        \end{bmatrix}, \label{eq: 9b} \\
        &~ u_f\in \mathbb{U},~ \hat{y}_f\in \mathbb{Y},
    \end{align}
\end{subequations}
where $h$ appears as a new decision variable in place of $g$. When data $\{ Z_p, U_f, Y_f\}$ are collected under open-loop conditions, a proper candidate of $\Phi$ is given by \cite{van2022data}:
\begin{equation}
    \label{equation: IV for open-loop condition}
    \Phi=\frac1{\bar{N}}{\rm col}(Z_p,U_f),
\end{equation}
where $1/\bar{N}$ is used to ensure the well-posedness of \eqref{eq: 9b} with $\bar{N}$ gradually increasing, which naturally ensures \eqref{equation: IV independent of noise}. Meanwhile, including $Z_p$ and $U_f$ in $\Phi$ ensures a high correlation with ${\rm col}(Z_p,U_f,Y_f)$, which helps to avoid the ill-posedness of \eqref{eq: 9b}. According to \cite{van2022data}, substituting \eqref{equation: IV for open-loop condition} into \eqref{equation: DeePC with IV} results in the traditional subspace predictive control (SPC) \cite{favoreel1999spc} scheme, a variant of \eqref{equation: DeePC} with $g$ being the least-norm solution. This sheds, from a new perspective, some light on the capability of SPC in handling noise within open-loop data. 

Under closed-loop control, there is always a backward impact on $U_f$ from $Y_f$ due to feedback. In this case, \eqref{equation: IV for open-loop condition} is no longer applicable due to the potential correlation between $U_f$ and $E_f$, thereby posing a critical challenge to implementing DDPC with closed-loop data \cite{dinkla2023closed}. Next, we explore some suitable specifications of IV that can hedge against the effect of $E_f$ while containing information in $U_f$ and $Y_f$.

\section{Instrumental Variable-Aided Data-Driven Control}

We assume that the control loop has already been closed by a controller $C(z)$ in the backward path, which enables to stabilize the system but may not assure a desirable tracking/disturbance rejection performance. More precisely, $C(z)$ is formulated as the following LTI system:
\begin{equation}
    \label{equation: closed-loop controller}
    \begin{aligned}
        x_c(t+1)&=A_cx_c(t)+B_c[y(t)-r(t)],\\
        u(t)&=C_cx_c(t)+D_c[y(t)-r(t)],
    \end{aligned}
\end{equation}
where $x_c(t)\in\mathbb{R}^{n_c}$ denotes internal state of the controller, and $r(t)\in\mathbb{R}^{p}$ denotes the reference signal. Two assumptions are made as follows.
\begin{enumerate}
    \item The quadruple $(A_c,B_c,C_c,D_c)$ is known.
    \item The signal $r(t)$ is independent of $e(t)$.
\end{enumerate}
Based on these assumptions under closed-loop conditions, we discuss two options of IV as substitutions of $U_f$, which are built upon future reference and controller information, respectively.

\subsection{IV based on future reference}

Indeed, Assumption 2 is rather standard in closed-loop SID, where a common choice of IV is the future reference $R_f$ of the controller \cite{pouliquen2010indirect,li2021subspace}. This naturally inspires the inclusion of $R_f$ into $\Phi$ as a substitution of $U_f$, which enables to better alleviate the effect of $E_f$. Meanwhile, there exists a certain degree of correlation between $R_f$ and $\{ U_f, Y_f \}$ under closed-loop control, which helps to eliminate the ill-posedness. 

Aside from these rationales, we further point out that the usage of $R_f$ helps to disentangle the 
coexistence of process dynamics and controller dynamics in input/output data and cancel the latter, which conceptually bears resemblance to closed-loop subspace identification \cite{huang2005closed}. Under closed-loop conditions, there exist dynamics in both forward path and backward path \cite{jiang2015simultaneous}. In the behavioral framework, however, the roles of input and output are conceptually equal \cite{markovsky2021behavioral}. Consequently, a combination of input/output data in \eqref{eq: 2b} yields an implicit characterization of \textit{bi-directional dynamics}, which may be problematic in DDPC. 



In \eqref{eq: 2b}, the equality $Z_p g = z_p$ is responsible for deciding the implicit initial condition, while constraints $U_f g = u_f$ and $Y_f g = \hat{y}_f$ describe the multi-step forward prediction. Because the main design freedom of \eqref{equation: DeePC} lies in $u_f$ and $\hat{y}_f$, our major focus is placed on analyzing the bi-directional dynamics encoded in $\{ U_f, Y_f \}$ and their influence on the control design. Akin to \eqref{equation: data equation}, the subspace relation between $U_f$ and $Y_f$ due to the controller $C(z)$ can be expressed as:
\begin{equation}
    \label{equation: subspace equation controller}
    U_f=\Gamma_{L_f}^cX_f^c+H_{L_f}^c(R_f-Y_f),
\end{equation}
where $\Gamma_{L_f}^c=\Gamma_{L_f}(A_c,C_c)$, $H_{L_f}^c=\mathcal{T}_{L_f}(A_c,B_c,C_c,D_c)$, $X_f^c$ denotes the state matrix of $C(z)$ and $R_f$ is a Hankel matrix constructed with $r(t)$. We then pre-multiple \eqref{equation: subspace equation controller} by $\Gamma_{L_f}^{c,\bot}$, which is the orthogonal column space of $\Gamma_{L_f}^c$:
\begin{equation}
    \label{equation: subspace equation controller eliminate state}
    \Gamma_{L_f}^{c,\bot}U_f+\Gamma_{L_f}^{c,\bot}H_{L_f}^cY_f=\Gamma_{L_f}^{c,\bot}H_{L_f}^cR_f.
\end{equation}
Further using IVs, \eqref{equation: subspace equation controller eliminate state} becomes:
\begin{equation}
    \label{equation: subspace equation controller eliminate state IV}
        \lim_{\bar{N}\to\infty}\Theta_c
    \begin{bmatrix}
        U_f\\Y_f    \end{bmatrix}\Phi^\top=\lim_{\bar{N}\to\infty}\Gamma_{L_f}^{c,\bot}H_{L_f}^cR_f\Phi^\top,
\end{equation}
where $\Theta_c \triangleq \Gamma_{L_f}^{c,\bot}[I~~~H_{L_f}^c]$ encodes essential information of the controller $C(z)$. Assume that the rows of $R_f$ are independent with those of $\Phi$, i.e.,
\begin{equation}
    \label{equation: independence of Rf and Z}
    \lim_{\bar{N}\to\infty}R_f\Phi^\top=0,
\end{equation}
which makes the left-hand side of \eqref{equation: subspace equation controller eliminate state IV} tend to zero. It then follows from \eqref{eq: 9b} that the multi-step forward prediction will be subject to the controller relation asymptotically:
\begin{equation}
    \label{equation: data sequence Rf Z}
    \lim_{\bar{N}\to\infty}\Theta_c
    \begin{bmatrix}
        u_f\\\hat{y}_f
    \end{bmatrix}=0.
\end{equation}
This indicates that under the condition \eqref{equation: independence of Rf and Z},  apart from the plant dynamics in the forward path, the multi-step forward prediction based upon closed-loop data has to follow an implicit relation of $C(z)$ in the backward path asymptotically, thereby leading to a \textit{restricted} behavioral representation. As a result, when solving for the optimal control design in \eqref{equation: DeePC with IV}, only a subset of input/output behaviors is taken into account, which may lead to compromised control performance. To counteract this, the choice of IV shall satisfy
\begin{equation}
    \label{equation: correlation Rf Z}
    \lim_{\bar{N}\to\infty} R_f\Phi^\top\neq0,
\end{equation}
implying that $\Phi$ ought to be as highly correlated with $R_f$ as possible. This suggests including $R_f$ into $\Phi$ for DDPC, which echoes with its wide usage in IV-based closed-loop SID.

\subsection{IV based on LCF of controller}

More recently, a novel design of IV for closed-loop SID 
was put forward by \cite{li2020novel}, which utilizes LCF of the controller to eliminate the estimation bias caused by correlation between inputs and noise. This motivates a new choice of IV in DDPC, which eliminates the effect of $E_f$ in DDPC while preserving most information in input and output data. Suppose $C(z)$ admits the following LCF:
\begin{equation}
    \label{equation: left coprime factorization}
    C(z)=V_c^{-1}(z)U_c(z),
\end{equation}
where $V_c(z)=(A_v,B_v,C_v,D_v)$ and $U_c(z)=(A_u,B_u,C_u,D_u)$. The relation between inputs and outputs from the aspect of coprime factors in \eqref{equation: left coprime factorization} can be written as:
\begin{equation}
    \label{equation: input output left coprime factorization}
    V_c(z)u(t)=U_c(z)[r_c(t)-y(t)].
\end{equation}
This yields a subspace matrix equation:
\begin{equation}
    \label{equation: subspace equation LCF}
    \begin{aligned}        \Gamma_{L_f}^vX_f^v+H_{L_f}^{c,v}U_f&=\Gamma_{L_f}^uX_f^u+H_{L_f}^{c,u}(R_f-Y_f)
    \end{aligned}
\end{equation}
where $\Gamma_{L_f}^v=\Gamma_{L_f}(A_v,C_v),~H_{L_f}^{c,v}=\mathcal{T}_{L_f}(A_v,B_v,C_v,D_v)$. $\Gamma_{L_f}^u$ and $H_{L_f}^{c,u}$ have similar definitions based on $U_c(z)$, and $X_f^v$ and $X_f^u$ denote extended state matrices of systems $V_c(z)$ and $U_c(z)$. Thus, based on \eqref{equation: subspace equation LCF}, we define the following IV:
\begin{equation}
    \label{equation: Xif}
    \begin{split}
    \Xi_f&\triangleq H_{L_f}^{c,v}U_f+H_{L_f}^{c,u}Y_f \\
        &=\Gamma_{L_f}^uX_f^u-\Gamma_{L_f}^vX_f^v+H_{L_f}^{c,u}R_f.
    \end{split}
\end{equation}
Clearly, $\Xi_f$ is expressible as a combination of future reference $R_f$ and initial states $\{ X_f^v, X_f^u \}$, all of which are uncorrelated with $E_f$ \cite{li2020novel}. Thus, $\Xi_f$ fulfills the condition \eqref{equation: IV independent of noise}. What's more, $\Xi_f$ encodes information in $U_f$ and $Y_f$ and thus can alleviate the ill-posedness of \eqref{eq: 9b}.

\subsection{IV-aided DDPC and regularization}

Based on above arguments, we use a combination of $R_f$ and $\Xi_f$, together with $Z_p$, to construct a new IV matrix for DDPC with closed-loop data:
\begin{equation}
    \label{equation: IV closed loop}
    \Phi=\frac{1}{\bar{N}}{\rm col}(Z_p,\Xi_f,R_f),
\end{equation}
where $Z_p$ is involved to preserve essential information in past input/output data, and both $R_f$ and $\Xi_f$ can help to cancel the correlation with $E_f$ while being correlated to $U_f$ and $Y_f$. By inserting \eqref{equation: IV closed loop} into \eqref{equation: DeePC with IV}, the DDPC with IV based on closed-loop data can be derived. 

Inspired by the projection-based regularization in \cite{dorfler2022bridging}, we further propose a new DDPC formulation with a novel IV-inspired regularizer with $g$ being the decision variable:
\begin{equation}
    \label{equation: regularized DeePC}
    \begin{aligned}
        \min_{u_f,\hat{y}_f,g}\ &\mathcal{J}(u_f,\hat{y}_f)+\lambda \cdot \|(I-\Pi)g\|_p\\
        \rm{s.t.}\quad&
        \begin{bmatrix}
            Z_p\\U_f\\Y_f 
        \end{bmatrix}g=
        \begin{bmatrix}
            z_p\\u_f\\\hat{y}_f
        \end{bmatrix},~ u_f \in \mathbb{U},~ \hat{y}_f \in \mathbb{Y},
    \end{aligned}
\end{equation}
where $\Pi=\Phi^\top\left(\begin{bmatrix}Z_p\\U_f\end{bmatrix}\Phi^\top\right)^\dagger\begin{bmatrix}Z_p\\U_f\end{bmatrix}$, and $\lambda\ge0$ is the regularization parameter. The rationale of this particular choice of $\Pi$ is made clear below.

\begin{theorem}
    \label{theorem: relaxation}
    Assume that $\mathbb{U}$ and $\mathbb{Y}$ are convex sets. For $\lambda\ge0$, the regularized DDPC problem \eqref{equation: regularized DeePC} is a convex relaxation of the following variant of SPC, which is an indirect DDPC formulation:
    \begin{equation}
        \label{equation: weighted SPC}
        \begin{aligned}
            \min_{u_f,\hat{y}_f}\ &\mathcal{J}(u_f,\hat{y}_f)\\
            {\rm s.t.}\ \ &\hat{y}_f=\Omega^*\begin{bmatrix}
                z_p\\u_f
            \end{bmatrix},~u_f\in\mathbb{U},~\hat{y}_f\in\mathbb{Y},\\
            &{\rm where}\ \Omega^*\ {\rm solves}\ \min_{\Omega}\ \left\|\left(Y_f-\Omega\begin{bmatrix}Z_p\\U_f\end{bmatrix}\right)\Phi^\top\right\|^2_F,
        \end{aligned}
    \end{equation}
    where $\Omega^*$ is a multi-step predictor fitted in a weighted least-squares sense based on the weighting matrix $\Phi^\top\Phi$.
\end{theorem}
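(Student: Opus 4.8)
\emph{Proof idea.} The plan is to treat the two words ``convex'' and ``relaxation'' separately. Convexity of \eqref{equation: regularized DeePC} is immediate: since $Q,R\succ0$ the stage cost $\mathcal{J}$ is a convex quadratic in $(u_f,\hat{y}_f)$; the term $\|(I-\Pi)g\|_p$ is a norm composed with the affine map $g\mapsto(I-\Pi)g$, hence convex, and $\lambda\ge0$ preserves convexity of the penalised objective; the behavioral relation is an affine equality, and $\mathbb{U},\mathbb{Y}$ are convex by hypothesis. (By the same token \eqref{equation: weighted SPC} is convex once $\Omega^*$ is fixed, since $\hat{y}_f=\Omega^*\,{\rm col}(z_p,u_f)$ is affine.) It then remains to show that \eqref{equation: regularized DeePC} relaxes \eqref{equation: weighted SPC}.

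Abbreviate $W\triangleq{\rm col}(Z_p,U_f)$ and $M\triangleq W\Phi^\top$, so that $\Pi=\Phi^\top M^\dagger W$. Two facts set the stage. First, $\Pi$ is idempotent: using $M^\dagger M M^\dagger=M^\dagger$ and $W\Phi^\top=M$ one gets $\Pi^2=\Phi^\top M^\dagger(W\Phi^\top)M^\dagger W=\Phi^\top M^\dagger W=\Pi$, so the penalty measures the deviation of $g$ from the range of the (generally oblique) projector $\Pi$. Second, the weighted least-squares problem defining $\Omega^*$ in \eqref{equation: weighted SPC} rewrites as $\min_{\Omega}\|Y_f\Phi^\top-\Omega M\|_F^2$, whose minimum-norm solution is $\Omega^*=Y_f\Phi^\top M^\dagger$; hence $Y_f\Pi=Y_f\Phi^\top M^\dagger W=\Omega^*W$, the structural identity that ties the projector $\Pi$ to the predictor $\Omega^*$ and motivates this particular choice of $\Pi$.

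The core step is to show that the ``hard'' version of \eqref{equation: regularized DeePC}, in which $\lambda\|(I-\Pi)g\|_p$ is replaced by the equality constraint $(I-\Pi)g=0$, has the same feasible pairs $(u_f,\hat{y}_f)$ and the same optimal value as \eqref{equation: weighted SPC}. For the forward inclusion, any feasible $g$ of the hard problem satisfies $g=\Pi g$, hence $\hat{y}_f=Y_fg=Y_f\Pi g=\Omega^*Wg=\Omega^*\,{\rm col}(z_p,u_f)$, so $(u_f,\hat{y}_f)$ is feasible for \eqref{equation: weighted SPC} at the identical cost. For the reverse inclusion, given $(u_f,\hat{y}_f)$ feasible for \eqref{equation: weighted SPC} I would take $g^\star\triangleq\Phi^\top M^\dagger\,{\rm col}(z_p,u_f)$ and verify $Y_fg^\star=\Omega^*\,{\rm col}(z_p,u_f)=\hat{y}_f$ and (again via $M^\dagger M M^\dagger=M^\dagger$) $\Pi g^\star=g^\star$; the only equality left to check is $Wg^\star=MM^\dagger\,{\rm col}(z_p,u_f)={\rm col}(z_p,u_f)$, which holds exactly when ${\rm col}(z_p,u_f)\in{\rm range}(W\Phi^\top)$, i.e. when $M=\begin{bmatrix}Z_p\\U_f\end{bmatrix}\Phi^\top$ has full row rank. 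I expect this full-row-rank requirement to be the main obstacle: it is precisely the well-posedness condition behind \eqref{eq: 9b} --- the very reason $Z_p$ is kept inside $\Phi$ and the scaling $1/\bar{N}$ is used --- and it must be brought in explicitly (or established asymptotically as $\bar{N}\to\infty$), since every pseudoinverse manipulation above relies on $MM^\dagger=I$.

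It then remains to assemble the argument. Each feasible point of the hard problem is feasible for \eqref{equation: regularized DeePC} with the same objective value, because its penalty vanishes; therefore the optimal value of \eqref{equation: regularized DeePC} is no larger than that of the hard problem, which equals the optimal value of \eqref{equation: weighted SPC} by the equivalence just proved. Combined with the convexity from the first paragraph, this shows \eqref{equation: regularized DeePC} is a convex relaxation of \eqref{equation: weighted SPC}; moreover, as $\lambda$ grows the penalty increasingly enforces $(I-\Pi)g=0$, so \eqref{equation: weighted SPC} is recovered in the limit, which makes precise the role of the regularizer.
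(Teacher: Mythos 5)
Your proof is correct and takes essentially the same approach as the paper: both reduce \eqref{equation: weighted SPC} to the hard-constrained problem with $(I-\Pi)g=0$ (the paper's \eqref{equation: regularized DeePC with lambda to infty}) and then relax by lifting that constraint into the objective, the only difference being that the paper reaches this via the auxiliary variable $h$, the least-norm reformulation \eqref{equation: inner problem} and the orthogonality constraint \eqref{equation: orthogonal constraint} with the substitution $g=\Phi^\top h$, whereas you argue directly in $g$ using the idempotency of $\Pi$ and the identity $Y_f\Pi=\Omega^*\,{\rm col}(Z_p,U_f)$. The consistency condition you flag, ${\rm col}(z_p,u_f)\in{\rm range}\left({\rm col}(Z_p,U_f)\Phi^\top\right)$ (full row rank of ${\rm col}(Z_p,U_f)\Phi^\top$), is indeed required for the reverse inclusion, but the paper's passage to \eqref{equation: inner problem} implicitly assumes the same well-posedness, so this is a point of added explicitness rather than a deviation.
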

\begin{proof}
    The optimum of the inner problem in \eqref{equation: weighted SPC} is:
    \begin{equation}
        \label{equation: optimal solution of predictor}
        \Phi^*=Y_f\Phi^\top\left(\begin{bmatrix}Z_p\\U_f\end{bmatrix}\Phi^\top\right)^\dagger.
    \end{equation}
    Based on \eqref{equation: optimal solution of predictor}, the inner problem of \eqref{equation: weighted SPC} amounts to:
    \begin{equation}
        \label{equation: inner problem}
        \begin{aligned}
            \hat{y}_f=Y_f\Phi^\top h^*\ {\rm where}\ h^*=\arg\min_{h}\ &\|h\|_2^2\\
            {\rm s.t.}\ &\begin{bmatrix}
                Z_p\\U_f
            \end{bmatrix}\Phi^\top h=
            \begin{bmatrix}
                z_p\\u_f
            \end{bmatrix}.
        \end{aligned}
    \end{equation}
    which can be equivalently expressed by including an orthogonality constraint:
    \begin{equation}
        \label{equation: orthogonal constraint}
        \begin{aligned}
            \begin{bmatrix}
                Z_p\\U_f\\Y_f
            \end{bmatrix}\Phi^\top h=\begin{bmatrix}
                z_p\\u_f\\\hat{y}_f
            \end{bmatrix},~ \left(I-\Tilde{\Pi}\right)h=0,
        \end{aligned}
    \end{equation}
    where $\Tilde{\Pi}=\left(\begin{bmatrix}Z_p\\U_f\end{bmatrix}\Phi^\top\right)^\dagger\begin{bmatrix}Z_p\\U_f\end{bmatrix}\Phi^\top$. Letting $g=\Phi^\top h$ for \eqref{equation: orthogonal constraint}, it follows that $\Phi^{\top} (I - \Tilde{\Pi})h = (I - \Pi)g = 0$. Then we arrive at the following relaxation of \eqref{equation: weighted SPC} as a single-level convex program:
     \begin{equation}
        \label{equation: regularized DeePC with lambda to infty}
        \begin{aligned}
            \min_{u_f,\hat{y}_f,g}\ &\mathcal{J}(u_f,\hat{y}_f)\\
        \rm{s.t.}\quad&
        \begin{bmatrix}
            Z_p\\U_f\\Y_f 
        \end{bmatrix}g=
        \begin{bmatrix}
            z_p\\u_f\\ \hat{y}_f
        \end{bmatrix},~ \|(I-\Pi)g\|_p=0,\\
        &~u_f\in\mathbb{U},~\hat{y}_f\in\mathbb{Y}.
        \end{aligned}
    \end{equation}
    By lifting the constraint $\|(I-\Pi)g\|_p=0$ into the objective via regularization, the problem \eqref{equation: regularized DeePC with lambda to infty} becomes further relaxed, thereby yielding \eqref{equation: regularized DeePC} as a convex relaxation of \eqref{equation: weighted SPC}.
\end{proof}

Indeed, Theorem \ref{theorem: relaxation} offers an extension of \cite[Theorem IV.6]{dorfler2022bridging} that bridges indirect and direct formulations of DDPC. When $\lambda\to\infty$, similar to \cite[Theorem 4]{breschi2023data}, \eqref{equation: regularized DeePC} reduces to
\begin{equation}
\label{equation: DeePC with IV inverse}
    \begin{aligned}
        \min_{u_f,\hat{y}_f}\ &\mathcal{J}(u_f,\hat{y}_f)\\
        \rm{s.t.}~~&
        \hat{y}_f=Y_f\Phi^\top\left(
        \begin{bmatrix}
            Z_p\\U_f 
        \end{bmatrix}\Phi^\top\right)^\dagger
        \begin{bmatrix}
            z_p\\u_f
        \end{bmatrix},~ u_f \in \mathbb{U},~ \hat{y}_f \in \mathbb{Y},
    \end{aligned}
\end{equation}
where the multi-step predictor is identified by weighted least-squares fitting of data, and the linking variable $g = \Phi^\top\left(\begin{bmatrix}Z_p\\U_f\end{bmatrix}\Phi^\top\right)^\dagger\begin{bmatrix}z_p\\u_f\end{bmatrix}$ is strictly limited to the row space of $\Phi$. In the case of finite $\lambda > 0$, a balance between control cost minimization and weighted least-square data fitting can be made, where $g$ in \eqref{equation: regularized DeePC} is allowed to lie outside the row space of $\Phi$, making it possible to attain improved control performance.


\section{Numerical Examples}
\label{Sec: numerical examples}

Consider the open-loop process \eqref{equation: system in innovation form} with system matrices
\begin{equation}
    \label{equation: system matrices}
    \begin{aligned}
        &A=\begin{bmatrix}
            0.7326&-0.0861\\0.1722&0.9909
        \end{bmatrix},
        ~B=\begin{bmatrix}
            0.0609\\0.0064
        \end{bmatrix},\\
        &C=\begin{bmatrix}
            0&1.4142
        \end{bmatrix},~D=1,
    \end{aligned}
\end{equation}
with that $e(t)\sim\mathcal{N}(0,\sigma_e^2)$ and $K$ is chosen such that $A_K$ is strictly stable. The plant is embedded into a standard feedback structure with the matrices of $C(z)$ as:
\begin{equation}
    \label{equation: controller matrices}
    \begin{aligned}
        &A_c=\begin{bmatrix}
            1&-0\\0.0722&1
        \end{bmatrix},
        ~B_c=\begin{bmatrix}
            0.2609\\0.164
        \end{bmatrix},\\
        &C_c=\begin{bmatrix}
            0.8&0.2142
        \end{bmatrix},
        ~D_c=-0.07.
    \end{aligned}
\end{equation}

For offline data collection, the concatenation of a series of square waves with a period of $600$, duty ratio of $70\%$ and amplitude from $-3$ to $3$ at $1$ intervals is used as the feedback reference with $N=4200$. While for DDPC, the objective is to track a square wave with a period of $60$ and amplitude of $1$. Both $L_p$ and $L_f$ are chosen as $30$. The cost weighting matrices are set as $Q = I_{L_f}$ and $R = 0.01I_{L_f}$. The input and output constraint sets are set $\mathbb{U}\in\mathbb{R}^{mL_f}$ and $\mathbb{Y}\in\mathbb{R}^{pL_f}$. To evaluate the control performance, the index $\mathcal{J}=\sum_{t=1}^{N_c} \|y(t)-r(t)\|_Q^2+\|u(t)\|_R^2$ is used,
where $N_c$ is the length of reference signal for predictive control with $N_c=60$ in the simulation. All quadratic programs are solved using the OSQP package \cite{osqp}. For a comprehensive comparison, the following control strategies are implemented.
\begin{itemize}
    \item \textbf{Oracle}: Model-based predictive control scheme with known $(A,B,C,D,K)$, where a steady-state Kalman filter is implemented for state estimation.
    \item \textbf{DDPC-IV}: The proposed DDPC \eqref{equation: DeePC with IV inverse} with IV in \eqref{equation: IV closed loop}.
    \item \textbf{RDDPC-IV}: The proposed regularized DDPC \eqref{equation: regularized DeePC}, where $\|(I-\Pi)g\|_2^2$ is used and $\lambda$ is selected within $[10^{-3},10^5]$.
    \item \textbf{DDPC-IV1}: DDPC \eqref{equation: DeePC with IV inverse} with $\Phi = {\rm col}(Z_p,R_f)$.
    \item \textbf{DDPC-IV2}: DDPC \eqref{equation: DeePC with IV inverse} with $\Phi={\rm col}(Z_p,\Xi_f)$.
    \item \textbf{SPC} \cite{favoreel1999spc,van2022data}: DDPC \eqref{equation: DeePC with IV inverse} with generic IV matrix \eqref{equation: IV for open-loop condition}.
\end{itemize}

\begin{figure}
    \centering
    \subfigure[SNR = $20$dB]{
        \includegraphics[width=0.39\textwidth]{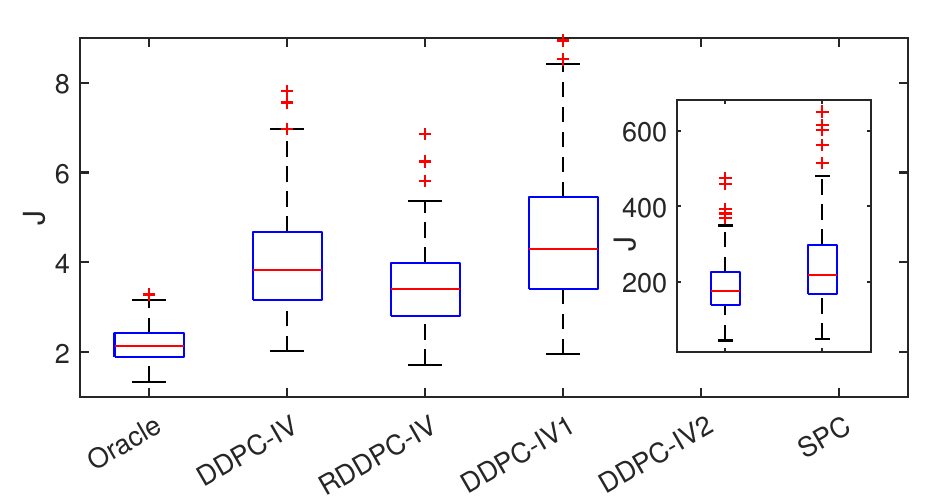}
    }
    \subfigure[SNR = $25$dB]{
        \includegraphics[width=0.39\textwidth]{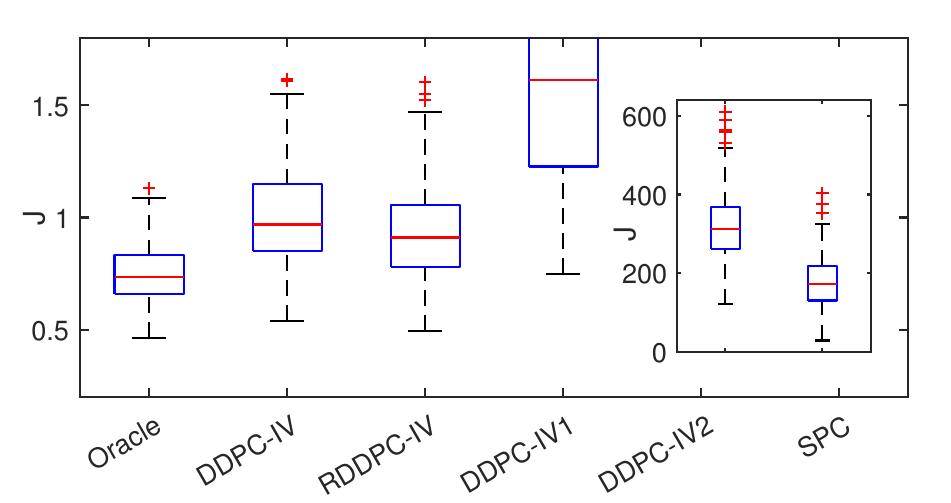}
    }
    \subfigure[SNR = $30$dB]{
        \includegraphics[width=0.39\textwidth]{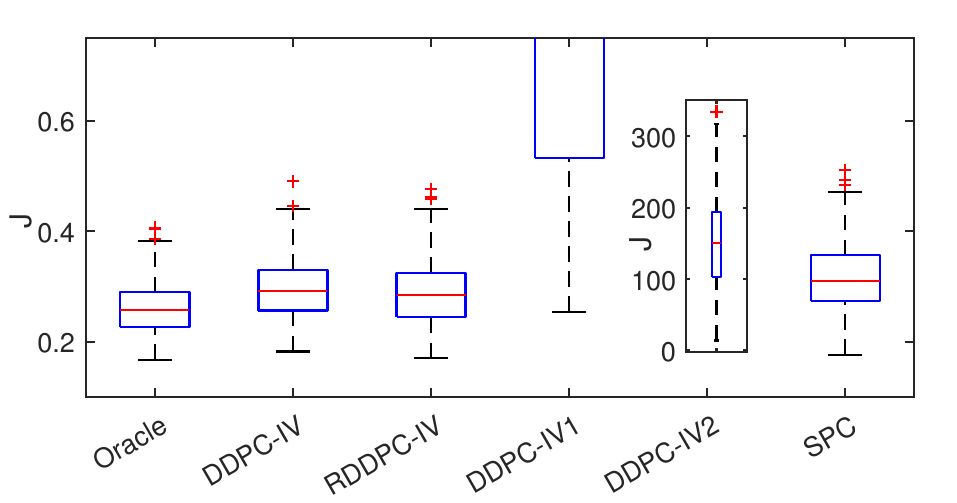}
    }
    \caption{Overall control performance of different approaches at various noise levels in $200$ Monte Carlo simulations}
    \label{fig:prediction performance}
\end{figure}

By varying $\sigma_e$, three cases corresponding to low, medium and high noise levels are created. $200$ Monte Carlo runs are carried out for each case to obtain a comprehensive evaluation of control performance. The simulation results of different algorithms are shown in Fig. \ref{fig:prediction performance}. Taken together in all noise levels, the DDPC-IV perfoms evidently better than SPC, DDPC-IV1 and DDPC-IV2, which suggests the necessity of simultaneous existence of $R_f$ and $\Xi_f$ in IV using closed-loop data. It can be observed that the RDDPC-IV performs better than DDPC-IV, which mainly owes to involvement of the IV-inspired regularizer.

\begin{figure}
    \centering
    \includegraphics[width=0.3\textwidth]{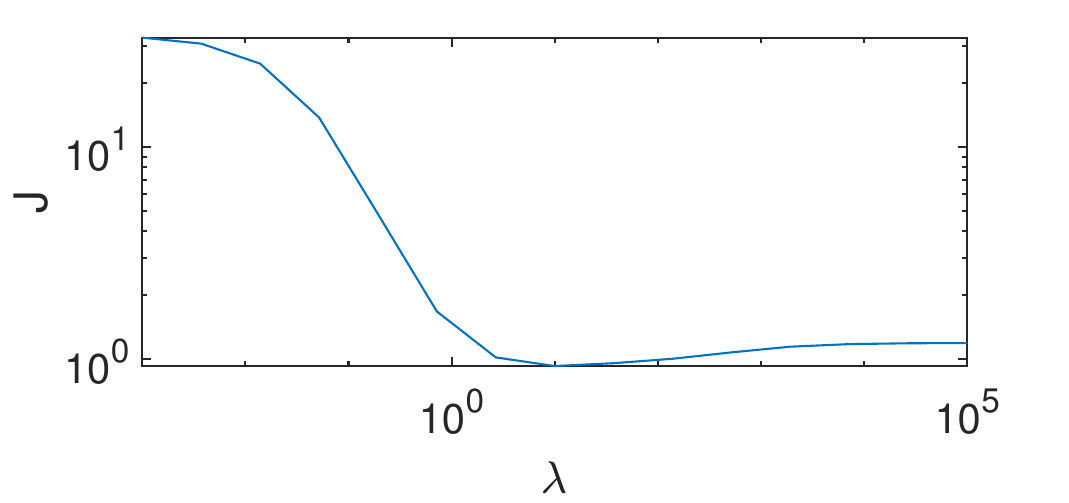}
    \caption{Performance of the regularized DDPC \eqref{equation: regularized DeePC} with IV \eqref{equation: IV closed loop} averaged over $200$ data sets for different $\lambda$ from $10^{-3}$ to $10^5$ with SNR = $25$dB.}
    \label{fig: lambda}
\end{figure}

The effect of $\lambda$ on the performance of RDDPC-IV is also studied, as shown in Fig. \ref{fig: lambda}. It can be observed that the control performance gets gradually improved as $\lambda$ decreases when $\lambda \ge 10$, which showcases the benefit of relaxation and offers more insight into results of Fig. \ref{fig:prediction performance}.

\section{Application To A Simulated Tubular Furnace}

As a key equipment in petro-chemical industry, the tubular furnace has been widely used for heating crude oil to a desired temperature before feeding into downstream units. As sketched in Fig. \ref{fig: furnace}, fuel gas is burnt to heat the crude oil in the tube, whereas an appropriate amount of air is needed to ensure the combustion efficiency. Thus, besides the outlet temperature of crude oil, the ${\rm O_2}$ content of stack gas shall be controlled for economic and efficient operations \cite{zeybek2006role}.
\begin{figure}[h]
    \centering
    \includegraphics[width=0.4\textwidth]{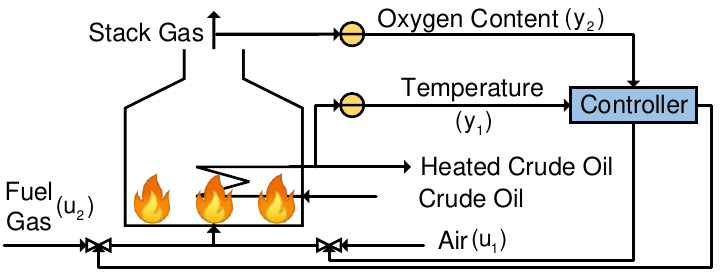}
    \caption{Structural diagram of industrial tubular furnace}
    \label{fig: furnace}
\end{figure}

As shown in Fig. \ref{fig: furnace}, the furnace system has two inputs, i.e. flow rates of natural gas ($u_1$, \SI{}{m^3/h}) and air ($u_2$, \SI{}{m^3/h}), and two outputs, i.e. outlet temperature of crude oil ($y_1$, \SI{}{\degreeCelsius}) and the oxygen content of stack gas ($y_2$, \SI{}{\%}). High-fidelity simulations of this two-input-two-output industrial furnace is enabled by the Fired Process Heater (FPH) simulator in the Honeywell UniSim Design Suite. Data collection and control are carried out with an interval time of $3$ mins due to the slow dynamics of the process. A coarsely tuned stabilizing controller $C(z)$ is parameterized as:
\begin{equation}
    \label{equation: controller matrices Unisim}
    \begin{aligned}
        &A_c=\begin{bmatrix}
            0&0&0&0\\0&1&0&0\\0&0&0&0\\0&0&0&1
        \end{bmatrix},~
        B_c=\begin{bmatrix}
            0&0.3260\\0&0.0802\\0.6250&0\\0.2990&0
        \end{bmatrix},\\
        &C_c=\begin{bmatrix}
            1&1&0&0\\0&0&1&1
        \end{bmatrix},~
        D_c=\begin{bmatrix}
            0&0\\0&0
        \end{bmatrix}.
    \end{aligned}
\end{equation}
A dataset $\{u^d(i),y^d(i)\}_{i=1}^N$ of size $N=5000$ is collected under closed-loop control, by varying the setpoint of $y_1$ between \SI{359}{\degreeCelsius} and \SI{367}{\degreeCelsius} and that of $y_2$ between $1.3\%$ and $2.7\%$. In online predictive control, the objective is to make $y_1$ track a square wave varying from \SI{362}{\degreeCelsius} to \SI{364}{\degreeCelsius}, stabilize $y_2$ around $2\%$ for high combustion efficiency. We set $L_p=L_f=70$, $Q={\rm diag}(1_{L_f}\otimes[10^{-4},5\times10^{-3}])$, and $R={\rm diag}(1_{L_f}\otimes[10^{-5},10^{-5}])$. The input constraint set $\mathbb{U}$ is set such that $u_1$ is maintained within [\SI{24000}{m^3/h}, \SI{34000}{m^3/h}], and $u_2 \in$ [\SI{2100}{m^3/h}, \SI{3100}{m^3/h}].

Using different control methods, outputs of the furnace under the same initial state are profiled in Fig. \ref{fig:Hysys}. Clearly, the proposed closed-loop DDPC-IV method achieves better tracking performance for $y_1$ than the generic SPC as well as the stabilizing controller $C(z)$, while maintaining $y_2$ near its desired value. This highlights the effectiveness of the proposed DDPC-IV scheme using closed-loop data.

\begin{figure}
    \centering
    \subfigure[Outlet temperature]{
        \includegraphics[width=0.45\textwidth]{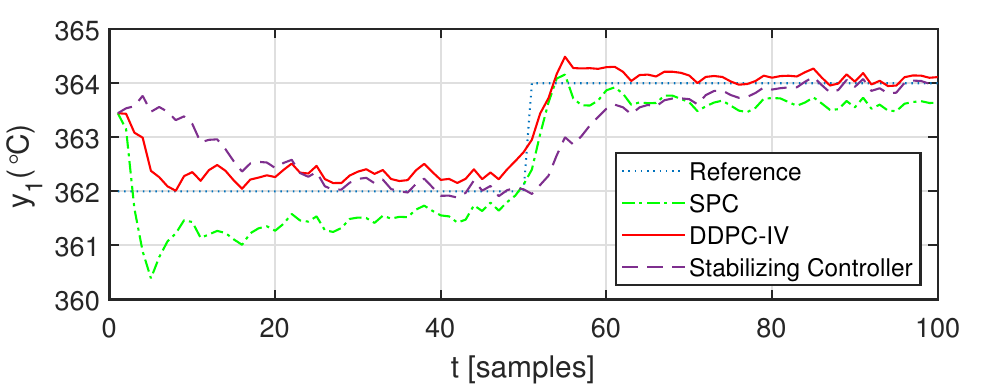}
    }
    \subfigure[${\rm O_2}$ content]{
        \includegraphics[width=0.45\textwidth]{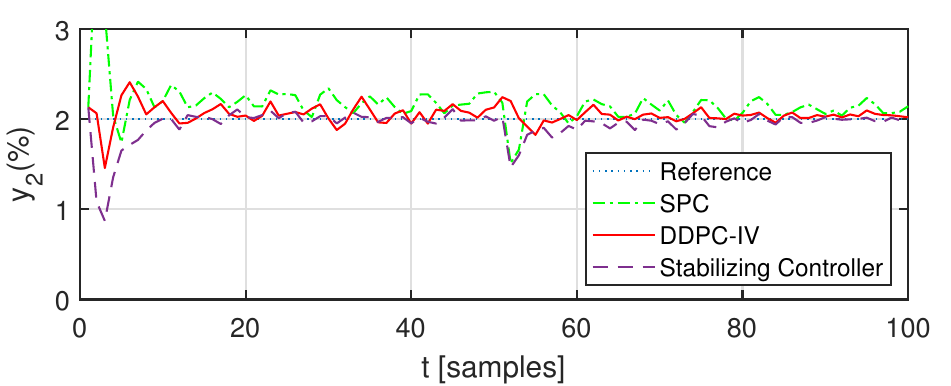}
    }
    \caption{Outputs of the tubular furnace system controlled by different controllers, including SPC, the proposed DDPC-IV and the stabilizing controller $C(z)$ designed as \eqref{equation: controller matrices Unisim}.}
    \label{fig:Hysys}
\end{figure}

\section{Conclusion}

In this paper, we proposed a new DDPC method with IV, which enables its implementation with closed-loop data. Two specific choices of IV based on closed-loop data were proposed inspired by works in closed-loop SID, which help to mitigate the noise effect and remain correlated with Hankel matrices of future inputs and outputs. Furthermore, an IV-inspired regularization scheme was proposed, where performance improvement can be achieved by balancing between control cost minimization and fitting a multi-step predictor from data. Numerical examples and application to a simulated furnace system were carried out to demonstrate the superior control performance of the proposed DDPC over the classical DDPC algorithms while using closed-loop data.

\addtolength{\textheight}{-12cm}   







\bibliographystyle{IEEEtran}
\bibliography{ref_bib}

\begin{thebibliography}{10}
\providecommand{\url}[1]{#1}
\csname url@samestyle\endcsname
\providecommand{\newblock}{\relax}
\providecommand{\bibinfo}[2]{#2}
\providecommand{\BIBentrySTDinterwordspacing}{\spaceskip=0pt\relax}
\providecommand{\BIBentryALTinterwordstretchfactor}{4}
\providecommand{\BIBentryALTinterwordspacing}{\spaceskip=\fontdimen2\font plus
\BIBentryALTinterwordstretchfactor\fontdimen3\font minus
  \fontdimen4\font\relax}
\providecommand{\BIBforeignlanguage}[2]{{%
\expandafter\ifx\csname l@#1\endcsname\relax
\typeout{** WARNING: IEEEtran.bst: No hyphenation pattern has been}%
\typeout{** loaded for the language `#1'. Using the pattern for}%
\typeout{** the default language instead.}%
\else
\language=\csname l@#1\endcsname
\fi
#2}}
\providecommand{\BIBdecl}{\relax}
\BIBdecl

\bibitem{hou2013model}
Z.-S. Hou and Z.~Wang, ``From model-based control to data-driven control:
  Survey, classification and perspective,'' \emph{Information Sciences}, vol.
  235, pp. 3--35, 2013.

\bibitem{shang2019data}
C.~Shang and F.~You, ``Data analytics and machine learning for smart process
  manufacturing: Recent advances and perspectives in the big data era,''
  \emph{Engineering}, vol.~5, no.~6, pp. 1010--1016, 2019.

\bibitem{willems2005note}
J.~C. Willems, P.~Rapisarda, I.~Markovsky, and B.~L. De~Moor, ``A note on
  persistency of excitation,'' \emph{Systems \& Control Letters}, vol.~54,
  no.~4, pp. 325--329, 2005.

\bibitem{markovsky2021behavioral}
I.~Markovsky and F.~D{\"o}rfler, ``Behavioral systems theory in data-driven
  analysis, signal processing, and control,'' \emph{Annual Reviews in Control},
  vol.~52, pp. 42--64, 2021.

\bibitem{markovsky2016missing}
I.~Markovsky, ``A missing data approach to data-driven filtering and control,''
  \emph{IEEE Transactions on Automatic Control}, vol.~62, no.~4, pp.
  1972--1978, 2016.

\bibitem{breschi2023data}
V.~Breschi, A.~Chiuso, and S.~Formentin, ``Data-driven predictive control in a
  stochastic setting: A unified framework,'' \emph{Automatica}, vol. 152, p.
  110961, 2023.

\bibitem{van2022data}
J.-W. van Wingerden, S.~P. Mulders, R.~Dinkla, T.~Oomen, and M.~Verhaegen,
  ``Data-enabled predictive control with instrumental variables: The direct
  equivalence with subspace predictive control,'' in \emph{2022 IEEE 61st
  Conference on Decision and Control (CDC)}.\hskip 1em plus 0.5em minus
  0.4em\relax IEEE, 2022, pp. 2111--2116.

\bibitem{dinkla2023closed}
R.~Dinkla, S.~P. Mulders, J.~W. van Wingerden, and T.~A. Oomen, ``Closed-loop
  aspects of data-enabled predictive control,'' in \emph{IFAC 22st Triennial
  World Congress}, 2023.

\bibitem{huang2008dynamic}
B.~Huang and R.~Kadali, \emph{Dynamic Modeling, Predictive Control and
  Performance Monitoring: A Data-Driven Subspace Approach}.\hskip 1em plus
  0.5em minus 0.4em\relax Springer, 2008.

\bibitem{huang2005closed}
B.~Huang, S.~X. Ding, and S.~J. Qin, ``Closed-loop subspace identification: An
  orthogonal projection approach,'' \emph{Journal of Process Control}, vol.~15,
  no.~1, pp. 53--66, 2005.

\bibitem{li2020novel}
K.~Li, H.~Luo, S.~Yin, and O.~Kaynak, ``A novel bias-eliminated subspace
  identification approach for closed-loop systems,'' \emph{IEEE Transactions on
  Industrial Electronics}, vol.~68, no.~6, pp. 5197--5205, 2020.

\bibitem{coulson2019data}
J.~Coulson, J.~Lygeros, and F.~D{\"o}rfler, ``Data-enabled predictive control:
  In the shallows of the {D}ee{PC},'' in \emph{2019 18th European Control
  Conference (ECC)}.\hskip 1em plus 0.5em minus 0.4em\relax IEEE, 2019, pp.
  307--312.

\bibitem{chiuso2007role}
A.~Chiuso, ``The role of vector autoregressive modeling in predictor-based
  subspace identification,'' \emph{Automatica}, vol.~43, no.~6, pp. 1034--1048,
  2007.

\bibitem{favoreel1999spc}
W.~Favoreel, B.~De~Moor, and M.~Gevers, ``{SPC}: Subspace predictive control,''
  \emph{IFAC Proceedings Volumes}, vol.~32, no.~2, pp. 4004--4009, 1999.

\bibitem{pouliquen2010indirect}
M.~Pouliquen, O.~Gehan, and E.~Pigeon, ``An indirect closed loop subspace
  identification method,'' in \emph{49th IEEE Conference on Decision and
  Control (CDC)}.\hskip 1em plus 0.5em minus 0.4em\relax IEEE, 2010, pp.
  4417--4422.

\bibitem{li2021subspace}
Y.~Li, Z.~Xiong, C.~Ye, X.~Zhang, F.~Xu, and X.~Zhao, ``Subspace identification
  of closed-loop {EIV} system based on instrumental variables using
  orthoprojection,'' \emph{Journal of Signal Processing Systems}, vol.~93, pp.
  345--355, 2021.

\bibitem{jiang2015simultaneous}
B.~Jiang, F.~Yang, W.~Wang, and D.~Huang, ``Simultaneous identification of
  bi-directional paths in closed-loop systems with coloured noise,''
  \emph{Automatica}, vol.~58, pp. 139--142, 2015.

\bibitem{dorfler2022bridging}
F.~D{\"o}rfler, J.~Coulson, and I.~Markovsky, ``Bridging direct \& indirect
  data-driven control formulations via regularizations and relaxations,''
  \emph{IEEE Transactions on Automatic Control}, vol.~68, no.~2, pp. 883--897,
  2023.

\bibitem{osqp}
B.~Stellato, G.~Banjac, P.~Goulart, A.~Bemporad, and S.~Boyd, ``{OSQP}: An
  operator splitting solver for quadratic programs,'' \emph{Mathematical
  Programming Computation}, vol.~12, no.~4, pp. 637--672, 2020.

\bibitem{zeybek2006role}
Z.~Zeybek, ``Role of adaptive heuristic criticism in cascade temperature
  control of an industrial tubular furnace,'' \emph{Applied Thermal
  Engineering}, vol.~26, no. 2-3, pp. 152--160, 2006.

\end{thebibliography}

\end{document}